\newtheorem{theorem}{Theorem}[section]
\newtheorem{definition}[theorem]{Definition}
\newtheorem{lemma}[theorem]{Lemma}
\newtheorem{example}[theorem]{Example}
\newtheorem{remark}[theorem]{Remark}
\DeclareRobustCommand{\rvdots}{%
	\vbox{
		\baselineskip4\p@\lineskiplimit\z@
		\kern-\p@
		\hbox{.}\hbox{.}\hbox{.}
}}
\def\footnoterule{\relax%
	\kern-5pt
	\hbox to \columnwidth{\hfill\vrule width .9\columnwidth height 0.4pt\hfill}
	\kern4.6pt}
\definecolor{darkblue}{rgb}{0.0,0.0,0.6}
\title{On Robustness of Double Linear Policy\\ with Time-Varying Weights}
\author{\large Xin-Yu Wang$^{*}$  and Chung-Han Hsieh${}^{**}$, \textit{Member, IEEE}
	\thanks{This paper is partially supported by the Ministry of Science and Technology~(MOST), Taiwan, R.O.C. under Grant:  MOST-111-2221-E-007-124-.\hskip -10pt ${}^*$Xin-Yu Wang is a graduate student with the Department of Quantitative Finance, National Tsing Hua University, Hsinchu 300044, Taiwan R.O.C. E-mail: \href{mailto: xinyuwang@gapp.nthu.edu.tw}{xinyuwang@gapp.nthu.edu.tw}. } 
	\thanks{\hskip -10pt ${}^{**}$Chung-Han Hsieh is with the Department of Quantitative Finance, National Tsing Hua University, Hsinchu 300044, Taiwan R.O.C. E-mail: \href{mailto: ch.hsieh@mx.nthu.edu.tw}{ch.hsieh@mx.nthu.edu.tw}.}
}
\begin{document}

	\maketitle
	\thispagestyle{empty}
	\pagestyle{empty}
	
	
\begin{abstract} 
In this paper, we extend the existing double linear policy by incorporating time-varying weights instead of constant weights and study a certain robustness property, called robust positive expectation (RPE),  in a discrete-time setting. 
 We prove that the RPE property holds by employing a novel elementary symmetric polynomials characterization approach and derive an explicit expression for both the expected cumulative gain-loss function and its variance.
To validate our theory, we perform extensive Monte Carlo simulations using various weighting functions. 
Furthermore, we demonstrate how this policy can be effectively incorporated with standard technical analysis techniques, using the moving average as a trading signal.
\end{abstract}
	
\vspace{3mm}
\begin{IEEEkeywords}
 Stochastic Systems, Finance, Robustness, Time-Varying Parameter Systems, Positive Systems. 
\end{IEEEkeywords}
	

\section{Introduction}\label{section: introduction}
The Robust Positive Expectation (RPE) is a property that ensures a trading policy has a positive expected profit robustly, and it is closely related to the stochastic positivity of a dynamical system in the control area.
Some early work related to robustness issues in financial systems can be found in \cite{dokuchaev2002dynamic}.
Later, a strategy called Simultaneous Long-Short (SLS) was proposed; see~\cite{barmish2011arbitrage,barmish2015new}, and shown to guarantee the RPE in markets with asset prices governed by geometric Brownian motion (GBM).

Later, several extensions were proposed in the literature, including generalization for Merton’s diffusion model in~\cite{baumann2016stock}, GBM model with time-varying parameters in~\cite{primbs2017robustness}, and any linear stochastic differential equation (SDE) in~\cite{baumann2019positive}. 
Additionally, the SLS strategy was extended to the proportional-integral (PI) controller in \cite{malekpour2018generalization}, to the latency trading in \cite{malekpour2016stock},  
and coupled SLS strategy on pair trading for two correlated assets was studied in \cite{deshpande2018generalization, deshpande2020simultaneous}.  
In \cite{maroni2019robust}, a robust design strategy for
stock trading via feedback control is proposed.
\cite{o2020generalized} proposed a generalized SLS with different weight settings on long and short positions. Recently, \cite{hsieh2022generalization} considered a long-only affine feedback control with a stop-loss order.

In \cite{hsieh2022robust}, a modified SLS strategy,  called \textit{double linear policy}, was proposed to solve an optimal weight selection problem using the mean-variance approach in a discrete-time setting while preserving the RPE property.  Then \cite{hsieh2022robustness} established a sufficient condition of RPE when the transaction costs are present. 
However, previous work including \cite{hsieh2022robust, hsieh2022robustness} and many SLS literature assumed constant weight, investing the same proportion of account value in each stage.
This paper extends the  weight of double linear policy from constant to a broad class of time-varying functions in a discrete-time setting and proves that the RPE property still holds for this extension.

\subsection{Contributions of the Paper}
Proving an RPE property for a policy with time-varying weights is known to be challenging.\footnote{The conventional method for proving RPE of a trading policy with constant weight often relies on a key identity that $(1+x)^k + (1-x)^k>2$ for all~$k>1$ and $x \neq 0$. However, this approach may not apply when~$x$ varies over time, as in the case of the policies with time-varying weights.} This paper addresses this challenge by using a novel \textit{elementary symmetric polynomials} characterization approach.  We extend the existing results by showing that the RPE property holds for the double linear policy with time-varying weights. Closed-form expressions for the expected cumulative gain-loss function and its variance are provided. 
Additionally, we illustrate how the proposed policy can be incorporated with the common technical analysis technique.
The results presented in this paper contribute to the literature on robustness in financial systems.

\section{Problem Formulation}
For stage $k=0,1,2,\dotsc,$ let $S(k)>0$ be the \textit{underlying risky asset price} at stage $k$. Then the associated \textit{per-period return} is given by
$X(k):=\frac{S(k+1)-S(k)}{S(k)}.
$
Assume that~$X(k) \in [X_{\min},   X_{\max}]$ for all $k$ with probability one, and known bounds $-1 < X_{\min} < 0 < X_{\max} < \infty$. Additionally, assume that $X_{\min}$ and~$X_{\max}$ are in the support of $X(k)$.
Furthermore, assume that~$X(k)$ are independent with a common mean $\mathbb{E}[X(k)] = \mu \in \mathbb{R}$ and common variance~${\rm var}(X(k)) = \sigma^2 > 0$ for all~$k$.\footnote{This setting does not assume an underlying stochastic process governing the prices of the risky asset and is less restrictive than the typical independent and identically distributed returns assumption. }
In the sequel, we assume that the trades incur zero transaction costs and that the underlying asset has perfect liquidity. This setting serves as a good starting point for building the model and is closely related to the \textit{frictionless market} in finance; see \cite{merton1992continuous}.

\subsection{Double Linear Policy with Time-Varying Weights}
In~\cite{hsieh2022robust} and many SLS literature, the trading policy is proposed with constant weights. This paper extends the constant weights to a time-varying weighting function.
With initial account value~$V(0) := V_0 > 0$, we spilt it into two parts: Taking a fraction $\alpha \in [0,1]$, define $V_L(0) := \alpha V_0$ as the initial account value for \textit{long} position and $V_S(0) := (1-\alpha) V_0$ for \textit{short} position.
If $\alpha=1$, we are in a long-only position while~$\alpha =0$ corresponds to a pure short position.

The trading policy $\pi(\cdot)$ is given by~$\pi(k) := \pi_L(k)+\pi_S(k)$, where $\pi_L$ and $\pi_S$ are of double linear forms:
\begin{align} \label{eq: double linear policy}
    \begin{cases}
        \pi_L(k) = w(k) V_L(k);  \\
        \pi_S(k) = -w(k) V_S(k).
    \end{cases}
\end{align}
The weighting function $w(k) \in \mathcal{W} := [0, w_{\max}]$ for all~$k$ with $w_{\max} := \min\{1,1/X_{\max}\}$ and is assumed to be \textit{causal}; i.e., it may depend only on the information up to stage $k-1$. 
Any $w(k) \in \mathcal{W}$ is called \textit{admissible} weight. This condition is closely related to the survival trades; see Section~\ref{subsection: survivability considerations}. Hence, the account values under the double linear policy~$\pi_L$ and~$\pi_S$, denoted by $V_L(k)$ and $V_S(k)$, can be described as the following linear time-varying stochastic difference equation:
\begin{align*}
    \begin{cases}
        V_L(k+1) = V_L(k)+X(k)\pi_L(k) + (V_L(k) - \pi_L(k)) r_f;\\
        V_S(k+1) = V_S(k)+X(k)\pi_S(k),
    \end{cases}
\end{align*}
where $r_f \geq 0$ is a \textit{riskless} rate for a bank account or a treasury bond.\footnote{In practice, when shorting an asset, the corresponding proceeds are typically held as \textit{collateral} by the broker to cover any potential losses from the short position. These proceeds are generally not available for immediate reinvestment into a riskless asset, such as a bank account or treasury bond.}
Note that when $r_f >0$, account profit increases. Hence, as seen later in sections to follow, when studying the robustness of the double linear policy, we assume without loss of generality~$r_f :=0$. Then the account value for long position reduce to $V_L(k+1) = V_L(k) + X(k)\pi_L(k)$.
Therefore, the overall account value for both long and short positions at stage~$k$ is given by
 \begin{align*}
    V(k) 
    &=V_L(k)+V_S(k)
    =V_0 \left( \alpha R_+(k) + (1-\alpha)R_-(k) \right) ,
\end{align*}
where $R_+(k):=\prod_{j=0}^{k-1}(1+w(j)X(j) )$ and $R_-(k):=\prod_{j=0}^{k-1}(1-w(j)X(j))$.

\subsection{Survivability Considerations} \label{subsection: survivability considerations}
Fix $V_0 > 0$ and $\alpha \in (0,1)$, we ensure that the trades are survivable for all $k$; i.e., the $w$-value that can potentially lead to $V(k)<0$ is disallowed. To see this, for stage~$k=0,1,\dotsc $, fix~$w(k) \in \mathcal{W}$. We observe that for the long position, we have $ V_L(k) 
     \geq V_0 \alpha (1+ w_{\max} X_{\min})^k > 0
$
since $w_{\max} \leq 1$ and $X_{\min} >-1$.
On the other hand, for the short position, we also have
$    V_S(k) 
     \geq V_0 (1-\alpha) (1- w_{\max} X_{\max})^k \geq 0
$
since $w_{\max} \leq 1/X_{\max}$.
Therefore, the overall account value satisfies $V(k) = V_L(k) + V_S(k) >0$ for all $k$ with probability~one.

\subsection{Robust Positive Expectation Problem}
The primary objective of this paper is to study the following RPE problem.

\begin{definition}[Robust Positive Expectation] \rm
    For stage $k = 0, 1, \dots$, let $V_0>0$ be the initial account value, and $V(k)$ be the account value at stage $k$. Define the expected cumulative gain-loss function up to stage $k$ as  $\overline{\mathcal{G}}(k):= \mathbb{E}[V(k)] - V_0$. A trading policy is said to have a \textit{robust positive expectation}~(RPE) property if it ensures that $\overline{\mathcal{G}}(k)>0$ for all $k>1$ and under all market conditions.
\end{definition}

\section{Gain-Loss Analysis}
For $k > 0$, let $X:=\{X(j)\}_{j=0}^{k-1}$ and ${\bm w}:=\{w(j)\}_{j =0}^{k-1}$.
With $V_0>0$, consider the double linear policy with $\alpha \in (0,1)$ and weight~$w(k) \in \mathcal{W}$ for all $k$.
The \textit{cumulative trading gain-loss function} up to stage $k$ is given by
\begin{align*}
    \mathcal{G}(\alpha, \bm{w}, k, X) &:= V(k) - V_0 \\
    &=  V_0 (\alpha R_+(k) + (1-\alpha)R_-(k)-1),
\end{align*}
and the expectation is $\overline{\mathcal{G}}(\alpha, {\bm w}, k, \mu):= \mathbb{E}[\mathcal{G}(\alpha, \bm{w}, k, X)]$.
If the weights are constant; i.e.,~$w(k):=w$ for all $k$, then the RPE property is readily established when $\alpha = 1/2$, see~\cite{hsieh2022robustness}. However, difficulties arise when the weighting function is time-varying. 
  To address this, a set of \textit{elementary symmetric polynomials}\footnote{ We say that $e(\cdot)$ is a \textit{symmetric polynomial} if for any permutation~$\sigma$ of the subscripts $1, 2, \cdots, n$, it follows that $e(x_{\sigma(1)}, x_{\sigma(2)}, \cdots, x_{\sigma(n)}) = e(x_1, x_2, \cdots, x_n)$.} in $k$ variables, $\{w(0), \dots, w(k-1)\}$, are considered and defined as $\{e_1(k), e_2(k),\dots e_k(k)\}$ with
\[
e_j(k) := \sum_{0 \leq i_1<i_2< \dotsb < i_j \leq k-1} w(i_1)w(i_2) \dotsm w(i_j)  
\] 
for $i_j \in \mathbb{N}$.
Note that $e_j(k)\geq 0$ for all~$j$ and~$k$, 
which is the sum of the $j$th multiplication term of admissible weights.
The following example illustrates the calculation of elementary symmetric polynomials.

\begin{example}[Elementary Symmetric Polynomials] \rm
This example illustrates the calculation of the elementary symmetric polynomials $e_j(k)$.
Specifically, for $k=1$, the polynomials to be calculate is $\{e_1(1)\}$ which is given by
$
e_1(1) = w(0).
$
For stage~$k=2$, the elementary symmetric polynomials~$\{e_1(2), e_2(2)\}$ are given~by
\begin{align*}
    e_1(2) &= \sum_{0\leq i_1 \leq 1} w(i_1) = w(0) + w(1); \\
    e_2(2) &= \sum_{0 \leq i_1 < i_2 \leq 1} w(i_1)w(i_2) = w(0)w(1).
\end{align*}
Similarly, for $k=3$, the elementary symmetric polynomials~$\{e_1(3), e_2(3), e_3(3)\}$  becomes
\begin{align*}
    e_1(3) &= \sum_{0\leq i_1 \leq 2} w(i_1) = w(0) + w(1) + w(2); \\
    e_2(3) &= \sum_{0 \leq i_1 < i_2 \leq 2} w(i_1)w(i_2) \\
    &= w(0)w(1) + w(0)w(2) + w(1)w(2); \\
    e_3(3) &= \sum_{0 \leq i_1 < i_2 < i_3 \leq 2} w(i_1)w(i_2)w(i_3) = w(0)w(1)w(2). 
\end{align*}

\end{example}

    As seen later in this section, the representation of elementary symmetric polynomials is useful for proving the RPE property; see Lemmas~\ref{lemma: representation of long-short returns} and \ref{lemma: positive polynomial} to follow.
Define shorthand notations $\overline{R}_+(k) := \mathbb{E}[R_+(k)]$
and $\overline{R}_-(k) := \mathbb{E}[R_-(k)]$. 
With the aid of the independence of $X(k)$, it follows that $\overline{R}_+(k) = \prod_{j=0}^{k-1}(1+w(j)\mu)$ and $\overline{R}_-(k) = \prod_{j=0}^{k-1}(1-w(j)\mu)$.

\begin{lemma} \label{lemma: representation of long-short returns}
Fix $m \geq 1.$ Let $\alpha \in (0,1)$ and $w(k) \in \mathcal{W}$ for all~$k$, $\overline{R}_+(k)$ and $\overline{R}_-(k)$ for stage $k = 2m+1$ satisfies 
    \begin{align*}
        \overline{R}_+(k)& = 1 + \sum_{j=0}^{m} e_{2j+1}(k) \mu^{2j+1} + \sum_{j=1}^{m} e_{2j}(k) \mu^{2j}; \\
        \overline{R}_-(k)& = 1 - \sum_{j=0}^{m}e_{2j+1}(k)\mu^{2j+1} + \sum_{j=1}^{m} e_{2j}(k)\mu^{2j}.
    \end{align*}
   On the other hand, for $k =2m$, $\overline{R}_+(k)$ and~$\overline{R}_-(k)$  satisfies
    \begin{align*}
        \overline{R}_+(k) & = 1+ \sum_{j=0}^{m-1}e_{2j+1}(k)\mu^{2j+1} + \sum_{j=1}^{m}e_{2j}(k)\mu^{2j}; \\
        \overline{R}_-(k)& = 1- \sum_{j=0}^{m-1}e_{2j+1}(k)\mu^{2j+1} + \sum_{j=1}^{m} e_{2j}(k)\mu^{2j}.
    \end{align*}
\end{lemma}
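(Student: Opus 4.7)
The proof is essentially a bookkeeping argument built on the classical identity that relates products of linear factors to elementary symmetric polynomials (i.e., Vieta's formulas). The plan is as follows.

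First, I would exploit independence of the $X(k)$ to push the expectation inside the product, obtaining the closed forms $\overline{R}_+(k) = \prod_{j=0}^{k-1}(1+w(j)\mu)$ and $\overline{R}_-(k) = \prod_{j=0}^{k-1}(1-w(j)\mu)$, exactly as stated in the paragraph preceding the lemma. Treating $\mu$ as a scalar and $w(0), \dots, w(k-1)$ as the ``variables'' of the elementary symmetric polynomials $e_j(k)$ defined in the paper, the fundamental identity
\[
\prod_{j=0}^{k-1}(1 + w(j)\, t) \;=\; \sum_{j=0}^{k} e_j(k)\, t^j,
\]
with the convention $e_0(k) := 1$, applied at $t = \mu$ and $t = -\mu$, gives
\[
\overline{R}_+(k) = \sum_{j=0}^{k} e_j(k)\, \mu^j, \qquad \overline{R}_-(k) = \sum_{j=0}^{k} (-1)^j e_j(k)\, \mu^j.
\]

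Next I would split each sum according to the parity of the index $j$, isolating the constant term $e_0(k)=1$. In $\overline{R}_-(k)$ the sign $(-1)^j$ collapses to $+1$ on even indices and $-1$ on odd indices, which is what produces the sign pattern in the lemma.

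Finally I would handle the two cases separately. For $k = 2m+1$, the odd indices range over $j = 2i+1$ with $i = 0, 1, \dots, m$ and the nonzero even indices over $j = 2i$ with $i = 1, \dots, m$; substituting these ranges reproduces the two expansions for odd $k$. For $k = 2m$, the odd indices range over $j = 2i+1$ with $i = 0, 1, \dots, m-1$ and the nonzero even indices over $j = 2i$ with $i = 1, \dots, m$; this yields the two expansions for even $k$.

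There is no real analytic obstacle here; everything reduces to Vieta's formula and a parity split. The only place to be careful is the index bookkeeping (in particular, the largest odd index is $2m+1$ when $k=2m+1$ but only $2m-1$ when $k=2m$), which is exactly the asymmetry reflected in the two cases of the lemma.
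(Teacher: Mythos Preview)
Your proposal is correct and mirrors the paper's own proof: both expand $\prod_{j=0}^{k-1}(1\pm w(j)\mu)$ into $\sum_{j} (\pm 1)^j e_j(k)\mu^j$ via the elementary symmetric polynomial identity and then split the sum by parity, treating $k=2m+1$ and $k=2m$ separately. The only cosmetic difference is that you invoke Vieta's formula explicitly with the convention $e_0(k)=1$, whereas the paper writes out the product expansion directly.
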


\begin{proof}
    We use a shorthand notation $w_j$ for $w(j)$ in the proof.
   Fix $m \geq 1$. 
   Now for the case $k = 2m+1$, which is an odd number, $\overline{R}_+(k)$ is given by
    \begin{align*}
        \overline{R}_+(k)
        &= \prod_{j=0}^{k-1} (1+w_j\mu) \\
        & = (1+w_0\mu)(1+w_1\mu) \dotsm (1+w_{k-1}\mu)\\
        & = 1 + e_1(k)\mu + \dotsb + e_k(k)\mu^k\\
        & = 1 + \sum_{j=1}^{k} e_j(k) \mu^j \\
        & = 1 + \sum_{j=0}^{m} e_{2j+1}(k) \mu^{2j+1} + \sum_{j=1}^{m} e_{2j}(k) \mu^{2j},
    \end{align*}
where the last equality separates terms into odd and even cases.    
Likewise, $\overline{R}_-(k)$ for $k=2m+1$ is 
    \begin{align*}
        &\overline{R}_-(k)
        =\prod_{j=0}^{k-1} (1-w_j\mu) \\
        &\quad = 1 + \sum_{j=1}^{k} (-1)^j e_j(k) \mu^j \\
        &\quad = 1 + \sum_{j=0}^{m}(-1)^{2j+1}e_{2j+1}(k) \mu^{2j+1} + \sum_{j=1}^{m}(-1)^{2j}e_{2j}(k) \mu^{2j}\\
        &\quad = 1 - \sum_{j=0}^{m}e_{2j+1}(k)\mu^{2j+1} + \sum_{j=1}^{m} e_{2j}(k)\mu^{2j}.
    \end{align*}
On the other hand, for the even number case $k = 2m$,  with an almost identical argument, it is readily verified that
    $
        \overline{R}_+(k)
        = 1+ \sum_{j=0}^{m-1}e_{2j+1}(k)\mu^{2j+1} + \sum_{j=1}^{m}e_{2j}(k)\mu^{2j}
    $
and
    $
     \overline{R}_-(k)
        = 1- \sum_{j=0}^{m-1}e_{2j+1}(k)\mu^{2j+1} + \sum_{j=1}^{m} e_{2j}(k)\mu^{2j}
    $
    and the proof is complete.
\end{proof}

\begin{lemma} \label{lemma: positive polynomial}
    For $k  > 1$,   $e_{2}(k)>0$ provided that at least two weights $w(i), w(j) >0$ for some $i,j \in \{0,1,\dots, k-1\}$ and~$i \neq j$.
\end{lemma}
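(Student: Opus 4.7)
The plan is to exploit the fact that $e_2(k)$ is manifestly a sum of nonnegative terms and then isolate the single strictly positive term guaranteed by the hypothesis. Since every admissible weight lies in $\mathcal{W} = [0, w_{\max}]$, each factor $w(\ell)$ is nonnegative, and therefore every summand $w(i_1) w(i_2)$ in
\[
e_2(k) = \sum_{0 \le i_1 < i_2 \le k-1} w(i_1) w(i_2)
\]
is nonnegative.

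Next I would invoke the hypothesis: there exist distinct indices $i \neq j$ in $\{0, 1, \dots, k-1\}$ with $w(i), w(j) > 0$. Without loss of generality, relabel so that $i < j$. Then the pair $(i, j)$ is one of the admissible ordered pairs appearing in the sum defining $e_2(k)$, and its contribution $w(i) w(j)$ is strictly positive by assumption. Since $e_2(k)$ is a sum of nonnegative terms that contains at least one strictly positive term, it follows that $e_2(k) > 0$, as required.

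The argument presents essentially no obstacle; its main content is the observation that an elementary symmetric polynomial of degree two in nonnegative variables is nonnegative, and strictly positive as soon as at least two of the variables are strictly positive. The condition $k > 1$ is needed only to ensure that the sum in question is nonempty, i.e., that there exists at least one pair $(i_1, i_2)$ with $0 \le i_1 < i_2 \le k-1$.
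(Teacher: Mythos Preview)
Your proof is correct, and it takes a genuinely different route from the paper's. The paper argues by induction on $k$: it checks the base case $k=2$ directly, and for the inductive step it uses the recursion
\[
e_2(k+1) = e_2(k) + \sum_{i=1}^{k} w(k-i)\,w(k),
\]
invoking the inductive hypothesis to conclude $e_2(k+1) > 0$. Your argument is instead entirely direct: you observe that every term in the defining sum is nonnegative because all weights lie in $\mathcal{W} \subseteq [0,\infty)$, and the hypothesis supplies one strictly positive term $w(i)w(j)$. This is shorter and conceptually cleaner; the inductive machinery in the paper is unnecessary for this particular lemma, and indeed the paper's inductive step is slightly delicate (one must check that the two guaranteed positive weights for stage $k+1$ can be assumed to lie among the first $k$ weights, or else handle separately the case where one of them is $w(k)$). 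Your direct approach sidesteps that issue entirely.
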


\begin{proof}
Fix $k > 1$. Then
$	
e_{2}(k) := \sum_{0 \leq i_1<i_2 \leq k-1} w(i_1)w(i_2)  .
$
Proceed a proof by induction. If $k=2$, which corresponds to  
$
     e_{2}(2) = w(0)w(1) .
$
Since we are assuming that at least two weights are strictly positive, in this case, it corresponds to $w(0), w(1) >0 $.  Therefore,  $e_2(2) >0.$ 
Next, assuming that~$e_{2}(k) >0$ for at least two  weights $w(i), w(j) >0$ for some $i,j \in \{0,1,\dots, k-1\}$, we must show $e_{2}(k+1) >0.$
Note that
\begin{align*}
e_{2}(k+1) 
    & = \sum_{0 \leq i_1 < i_2 \leq k} w(i_1)w(i_2) \\
    &= \sum_{0 \leq i_1<i_2 \leq k-1} w(i_1)w(i_2)  + \sum_{i=1}^{k} w(k-i)w(k) \\
    &=  e_2(k) +  \sum_{i=1}^{k} w(k-i)w(k)  >0,
\end{align*}
where the last inequality holds by inductive hypothesis that~$e_2(k) >0$ for at least two weights, say $w(i), w(j) >0$ for some $i,j \in \{0,1,\dots, k-1\}$  and the fact that the sum~$\sum_{i=1}^{k} w(k-i)w(k)  \geq 0$.
\end{proof}

\begin{theorem}[RPE with Time-Varying Weights] \label{theorem: RPE with time-varying weights}
  Let $V_0>0$.  Consider a double linear policy with $\alpha \in (0,1)$ and weights~$w(k)\in \mathcal{W}$ for all $k$. Then, the \textit{expected cumulative gain-loss function} is given by
    \begin{align*}
        \overline{\mathcal{G}}(\alpha, {\bm w}, k, \mu) 
        &= V_0\left( \alpha \overline{R}_+(k)+(1-\alpha)\overline{R}_-(k) -1 \right).
    \end{align*}
    Moreover, when $\alpha = 1/2$ and $w(k) \in \mathcal{W}$ with at least two weights being strictly positive, the RPE property holds; i.e.,~$
    \overline{\mathcal{G}}(\alpha, {\bm w}, k, \mu)>0
    $ for $k>1$ and all \(\mu \neq 0\). 
\end{theorem}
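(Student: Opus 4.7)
The plan is to split the theorem into its two parts and handle them in sequence, leveraging the two preparatory lemmas. For the closed-form expression, I would simply invoke linearity of expectation: since $V(k) = V_0(\alpha R_+(k) + (1-\alpha) R_-(k))$ pointwise, taking expectations and subtracting $V_0$ yields the stated formula immediately, with no need to expand the products, because $\overline{R}_+(k) := \mathbb{E}[R_+(k)]$ and $\overline{R}_-(k) := \mathbb{E}[R_-(k)]$ are already given in product form (using independence of the $X(k)$) in the paragraph preceding Lemma~\ref{lemma: representation of long-short returns}.

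For the RPE claim, I would specialize to $\alpha = 1/2$, where the expected gain-loss becomes
\[
\overline{\mathcal{G}}(1/2, {\bm w}, k, \mu) \;=\; \frac{V_0}{2}\bigl(\overline{R}_+(k) + \overline{R}_-(k) - 2\bigr).
\]
The key step is to substitute the elementary-symmetric-polynomial representations from Lemma~\ref{lemma: representation of long-short returns}. The odd-power terms in $\mu$ appear with opposite signs in $\overline{R}_+(k)$ and $\overline{R}_-(k)$ and therefore cancel when added, while the even-power terms appear with identical signs and double. Consequently, for either parity of $k$, I expect to obtain the compact identity
\[
\overline{\mathcal{G}}(1/2, {\bm w}, k, \mu) \;=\; V_0 \sum_{j=1}^{\lfloor k/2 \rfloor} e_{2j}(k)\, \mu^{2j}.
\]

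From there, the RPE conclusion falls out in one line. Every $e_{2j}(k)$ is a sum of products of nonnegative admissible weights $w(i) \in \mathcal{W}$, hence $e_{2j}(k) \geq 0$ for all $j$. Since $\mu \neq 0$, each $\mu^{2j} > 0$. To upgrade nonnegativity to strict positivity, I would isolate the $j=1$ term, observe $k > 1$ forces $\lfloor k/2 \rfloor \geq 1$ so this term is present, and then apply Lemma~\ref{lemma: positive polynomial}: the hypothesis that at least two weights $w(i), w(j)$ are strictly positive gives $e_2(k) > 0$, making $e_2(k)\mu^2 > 0$. Since the remaining summands are nonnegative, $\overline{\mathcal{G}}(1/2, {\bm w}, k, \mu) > 0$.

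I do not anticipate any serious obstacle here, since Lemmas~\ref{lemma: representation of long-short returns} and~\ref{lemma: positive polynomial} have done the real combinatorial work. The only points requiring care are (i) writing the display for both parities of $k$ uniformly as a sum over $1 \leq j \leq \lfloor k/2 \rfloor$, and (ii) noting explicitly why the admissibility condition $w(k) \in \mathcal{W} = [0, w_{\max}]$ guarantees all $e_{2j}(k) \geq 0$ so that dropping the tail of the sum is harmless. Both are essentially bookkeeping rather than genuine difficulties.
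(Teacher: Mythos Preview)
Your proposal is correct and follows essentially the same approach as the paper: linearity of expectation for the first part, then Lemma~\ref{lemma: representation of long-short returns} to expand $\overline{R}_\pm(k)$ in elementary symmetric polynomials so that the odd-$\mu$ terms cancel at $\alpha=1/2$, leaving $V_0\sum_{j\ge 1} e_{2j}(k)\mu^{2j}$, with strict positivity coming from Lemma~\ref{lemma: positive polynomial}. The only cosmetic differences are that the paper keeps $\alpha$ general until after the expansion (obtaining a $(2\alpha-1)$ coefficient on the odd sum) and treats the parities of $k$ separately rather than writing a single sum up to $\lfloor k/2\rfloor$.
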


\begin{proof}
    To calculate the expected cumulative gain-loss function, we use the fact that per-period returns $X(k)$ are independent with common mean $\mathbb{E}[X(k)] = \mu$ for all $k$. Thus, it is readily verified that
    \begin{align*}
        \overline{\mathcal{G}}(\alpha, \bm{w}, k, \mu)  
        &= \mathbb{E}\left[ V_0 \left( \alpha R_+(k) + (1-\alpha) R_-(k) -1 \right) \right] \\
        &= V_0 \left( \alpha \mathbb{E}[R_+(k)] + (1-\alpha) \mathbb{E}[R_-(k)] -1 \right) \\
        &= V_0 \left( \alpha \overline{R}_+(k) + (1-\alpha) \overline{R}_-(k)-1 \right),
    \end{align*}
    which is identical to the desired equality in the statement of the theorem.
    To complete the proof, we now show that the RPE property holds. Fix $k>1$. Consider two cases by splitting~$k$ into odd and even numbers. 
    We begin by considering~$k=2m+1$ with $m\geq 1$, corresponding to an odd number. Then, using Lemma~\ref{lemma: representation of long-short returns} for the odd case, we have 
    \begin{align*}
        &\overline{\mathcal{G}}(\alpha, \bm{w}, k, \mu) \\
        &= V_0 \bigg( \alpha \Big(1 + \sum_{j=0}^{m}e_{2j+1}(k)\mu^{2j+1} + \sum_{j=1}^{m}e_{2j}(k)\mu^{2j} \Big) \\
        &\quad + (1-\alpha)\Big(1 - \sum_{j=0}^{m}e_{2j+1}(k)\mu^{2j+1} + \sum_{j=1}^{m}e_{2j}(k)\mu^{2j}\Big) -1 \bigg)\\
        &= V_0\bigg( (2\alpha-1) \sum_{j=0}^{m}e_{2j+1}(k) \mu^{2j+1} + \sum_{j=1}^{m}e_{2j}(k) \mu^{2j} \bigg) \\
        &= V_0\bigg( (2\alpha-1)\mu \sum_{j=0}^{m}e_{2j+1}(k) \mu^{2j} + \sum_{j=1}^{m}e_{2j}(k) \mu^{2j} \bigg).
    \end{align*}
Since $w(k) \in \mathcal{W}$, we have 
$e_j(k) \geq 0$ for all $j$ and $k$. 
Hence, it follows that~$e_{2j+1}(k) \mu^{2j} \geq 0$ and $e_{2j}(k) \mu^{2j} \geq 0$.  
In addition, for~$\alpha = {1}/{2}$, the expected cumulative gain-loss function becomes 
\begin{align*}
\overline{\mathcal{G}}(\alpha, {\bm w}, k, \mu) 
	&= V_0 \sum_{j=1}^{m} e_{2j}(k) \mu^{2j} \\
	&= V_0 \left(  e_{2}(k) \mu^{2}  + \sum_{j=2}^{m} e_{2j}(k) \mu^{2j} \right).
\end{align*}
Since $V_0 > 0$, $\mu \neq 0$ and at least two weights are strictly positive $w(i), w(j) > 0$ for some $i, j$ with $i \neq j$, Lemma~\ref{lemma: positive polynomial} indicates that $e_2(k) >0$. It follows that $\overline{\mathcal{G}}(\alpha, \bm{w}, k, \mu)  >0$.
    On the other hand, consider the case $k=2m$, which is an even number. 
    Using the second part of Lemma~\ref{lemma: representation of long-short returns}, we obtain 
    {\small 
    \begin{align*}
        \overline{\mathcal{G}}(\alpha, \bm{w}, k, \mu)
        &= V_0 \bigg( (2\alpha-1)\mu \sum_{j=0}^{m-1} e_{2j+1}(k) \mu^{2j} + \sum_{j=1}^{m}e_{2j}(k)\mu^{2j} \bigg). 
    \end{align*}
    }A similar argument can be made for showing that~$e_{2j}(k) \mu^{2j} \geq 0$ for all $j$ and $k$.
    Hence, taking $\alpha = 1/2$, using the fact that at least two weights are strictly positive, and Lemma~\ref{lemma: positive polynomial}, we again have $\overline{\mathcal{G}}(\alpha, \bm{w}, k, \mu)  > 0$ when $\mu \neq 0$, which completes the~proof.
\end{proof}


\begin{remark} \rm \label{remark: RPE remark}
$(i)$. Theorem~\ref{theorem: RPE with time-varying weights} can be viewed as an extension of the existing RPE result using double linear policy with constant weights stated in \cite{hsieh2022robust}. That is, by taking $w(k) := w$ for all $k$, one readily obtains
\begin{align*}
    \overline{\mathcal{G}}(\alpha, \bm{w}, k, \mu)  
        &= V_0\left( \alpha (1+w\mu)^k +(1-\alpha) (1-w\mu)^k -1 \right).
\end{align*}
If $\alpha = 1/2$ and $w \in \mathcal{W}\setminus \{0\}$,  the desired strict positivity holds; i.e., $\overline{\mathcal{G}}(\alpha, \bm{w}, k, \mu) > 0 $ for $\mu \neq 0$ and all $k$.
$(ii)$. According to Theorem~\ref{theorem: RPE with time-varying weights}, it is readily verified that the expected cumulative gain-loss function  satisfies
    $ \overline{\mathcal{G}}(\alpha, \bm{w}, k, \mu) > 0$ for all $k $
     if~${\rm sgn}( (2\alpha - 1) \mu )>0$.
\end{remark}

\begin{lemma}[Variance of the Gain-Loss Function] \label{lemma: variance of the gain-loss function}
Let $V_0>0$. Consider a double linear policy with $\alpha \in (0,1)$ and weights $w(k) \in \mathcal{W}$ for all $k$ then 
the variance of the cumulative gain-loss function is given by  
\begin{align*}
    &{\rm var}(\mathcal{G}(\alpha , \bm{w}, k, X)) \\
    &= V_0^2 \Bigg( \alpha^2 \prod_{j=0}^{k-1} \Big(w(j)^2 \sigma^2 + (1+w(j)\mu)^2  \Big) \\
        &\qquad + (1-\alpha)^2 \prod_{j=0}^{k-1} \Big(w(j)^2\sigma^2 + (1-w(j)\mu)^2 \Big) \\
        &\qquad + 2 \alpha (1-\alpha) \prod_{j=0}^{k-1}\Big(1-w(j)^2(\sigma^2 + \mu^2)\Big) \\
        &\qquad - 2\alpha(1-\alpha)\prod_{j=0}^{k-1}(1-w(j)^2\mu^2) \\
        &\qquad -\alpha^2 \prod_{j=0}^{k-1} (1+w(j) \mu)^2 - (1-\alpha)^2 \prod_{j=0}^{k-1} (1-w(j)\mu)^2 \Bigg). 
\end{align*}
\end{lemma}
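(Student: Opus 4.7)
The plan is to exploit the product structure of $R_+(k)=\prod_{j=0}^{k-1}(1+w(j)X(j))$ and $R_-(k)=\prod_{j=0}^{k-1}(1-w(j)X(j))$ together with the independence of the per-period returns $X(j)$. Since the constant $-V_0$ and scaling by $V_0$ do not affect variance, I first reduce the problem to computing
\[
{\rm var}(\alpha R_+(k) + (1-\alpha) R_-(k)) = \alpha^2\,{\rm var}(R_+(k)) + (1-\alpha)^2\,{\rm var}(R_-(k)) + 2\alpha(1-\alpha)\,{\rm cov}(R_+(k), R_-(k)),
\]
so everything reduces to evaluating three second-moment quantities.

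Next I would compute each of the three ingredients by exploiting independence across stages. For the two ``diagonal'' terms, observe that $\mathbb{E}[(1\pm w(j)X(j))^2] = 1 \pm 2w(j)\mu + w(j)^2(\sigma^2+\mu^2) = w(j)^2\sigma^2 + (1\pm w(j)\mu)^2$, so independence yields $\mathbb{E}[R_\pm(k)^2]=\prod_{j=0}^{k-1}\bigl(w(j)^2\sigma^2 + (1\pm w(j)\mu)^2\bigr)$. Subtracting $(\mathbb{E}[R_\pm(k)])^2 = \prod_{j=0}^{k-1}(1\pm w(j)\mu)^2$ delivers ${\rm var}(R_\pm(k))$, which already matches the first, second, fifth, and sixth lines of the claimed formula.

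For the cross term I would use the key algebraic identity $R_+(k)R_-(k) = \prod_{j=0}^{k-1}(1-w(j)^2 X(j)^2)$, and then apply independence once more to obtain $\mathbb{E}[R_+(k)R_-(k)] = \prod_{j=0}^{k-1}\bigl(1 - w(j)^2(\sigma^2+\mu^2)\bigr)$, using $\mathbb{E}[X(j)^2]=\sigma^2+\mu^2$. Since $\mathbb{E}[R_+(k)]\mathbb{E}[R_-(k)] = \prod_{j=0}^{k-1}(1-w(j)^2\mu^2)$ by Lemma~\ref{lemma: representation of long-short returns} (or direct multiplication of the mean expressions), subtraction yields ${\rm cov}(R_+(k),R_-(k))$, accounting for the third and fourth lines of the stated formula. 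Multiplying the assembled sum by $V_0^2$ completes the derivation.

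I do not anticipate any serious obstacle: the proof is essentially a bookkeeping exercise once the product-and-independence structure is identified. The only subtlety worth stating carefully is the factorization $R_+(k)R_-(k)=\prod_j(1-w(j)^2 X(j)^2)$, which turns a potentially messy bilinear product into a single product that independence can dispose of in one line. All other steps are routine expansions.
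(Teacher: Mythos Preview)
Your proposal is correct and follows essentially the same route as the paper: both arguments reduce to computing the three second-moment products $\mathbb{E}[R_+^2]$, $\mathbb{E}[R_-^2]$, and $\mathbb{E}[R_+R_-]$ via independence and the identity $(1+w(j)X(j))(1-w(j)X(j))=1-w(j)^2X(j)^2$. The only difference is organizational---the paper expands $\mathbb{E}[\mathcal{G}^2]-\overline{\mathcal{G}}^2$ directly and lets the cross terms with the constant $-1$ cancel, whereas you first strip off the constant shift and use the bilinear variance decomposition, which is a slightly cleaner bookkeeping choice but not a different idea.
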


\begin{proof}
The proof is based on straightforward calculation on ${\rm var}(\mathcal{G}(\alpha, \bm{w}, k, X))
        = \mathbb{E}[\mathcal{G}^2(\alpha, \bm{w}, k, X)] - \overline{\mathcal{G}}^2(\alpha, \bm{w}, k, \mu)$.
We first calculate the second moment of the gain-loss function:
With the aid of the independence of $X(k)$, a lengthy but straightforward calculation leads to
{\small 
\begin{align} \label{eq: E[g_square]}
    &\mathbb{E}[\mathcal{G}^2(\alpha, \bm{w}, k, X)] \nonumber \\
    &= V_0^2 \Bigg(\alpha^2 \prod_{j=0}^{k-1} \Big(w(j)^2 \sigma^2 + (1+w(j)\mu)^2\Big) \nonumber \\
    &\qquad + (1-\alpha)^2 \prod_{j=0}^{k-1} \Big(w(j)^2\sigma^2 + (1-w(j)\mu)^2  \Big) + 1 \nonumber \\
    &\qquad + 2 \alpha (1-\alpha) \prod_{j=0}^{k-1}\Big(1-w(j)^2(\sigma^2 + \mu^2)\Big) \nonumber \\
    &\qquad - 2 \alpha \prod_{j=0}^{k-1} ( 1+w(j)\mu )  - 2 (1-\alpha)\prod_{j=0}^{k-1}( 1-w(j)\mu ) \Bigg).
\end{align}
}Then we calculate the square of the expected cumulative gain-loss function. That is,
{\small 
\begin{align} \label{eq: g_bar_square}          
&\overline{\mathcal{G}}^2(\alpha, \bm{w}, k, \mu) \nonumber \\
    &= V_0^2 \bigg( \alpha \overline{R}_+(k) + (1-\alpha) \overline{R}_-(k) -1 \bigg)^2 \nonumber \\
    &= V_0^2 \bigg( \alpha^2 \prod_{j=0}^{k-1} (1+w(j) \mu)^2 + (1-\alpha)^2 \prod_{j=0}^{k-1} (1-w(j)\mu)^2 \nonumber \\
    &\qquad + 1 +2\alpha(1-\alpha)\prod_{j=0}^{k-1}(1-w(j)^2\mu^2) \nonumber \\
    &\qquad - 2\alpha \prod_{j=0}^{k-1}(1+w(j)\mu) - 2(1-\alpha)\prod_{j=0}^{k-1}(1-w(j)\mu)\bigg).
\end{align}
}In combination with Equations~(\ref{eq: E[g_square]}) and (\ref{eq: g_bar_square}), a lengthy but straightforward calculation again leads to the desired expression for the variance of the gain-loss function.
\end{proof}

\begin{remark} \rm
  If $w(k):=w$ for all $k$, Lemma~\ref{lemma: variance of the gain-loss function} reduces to the variance expression obtained in \cite[Lemma 3.1]{hsieh2022robust}.
\end{remark}
 
\section{Illustrative Examples}
This section illustrates the robustness of the double linear policy with time-varying weights using various examples. 
 
 \begin{example}[GBM with Jumps] \rm
 \label{example: GBM with jumps}
We now collect historical daily prices for Apple Inc. (Ticker:~AAPL) over a one-year period from January~2022 to December~2022.\footnote{Note that this one-year period provides a good test case since 2022 is often described as a bearish market. }
 Having estimated the volatility $\sigma^*$, we simulate the associated GBM prices with jumps, see \cite{etheridge2002course}, using Monte Carlo simulations. That is, for~$t \in [0, T]$, we generate the price governed by the following stochastic differential equation:
\begin{align} \label{eq: GBM with jump: stock dynamics}
    S_t &= S_0 \exp \left( \left(\mu^* - \frac{1}{2} {\sigma^*}^2\right) t + \sigma^* W_t\right)(1 - \delta)^{N_t},
\end{align}
 where $W_t := \{W(t): t\geq 0\}$ is a standard Wiener process, $\mu^*$ is the \textit{drift} constant, $\sigma^*$ is the volatility constant, $N_t = \{N(t): t\geq 0\}$ is a Poisson process with $P(N_t = k) = \frac{ (\lambda t)^k}{k!} e^{-\lambda t }$ that is independent with $W_t$, $\lambda$ is the average rate of the jump that occurs for the process, and $\delta \in [0, 1)$ is the magnitude of the random jump.\footnote{For 252 daily data, the drift rate and volatility constants can be approximated by using  $\mu^* \approx 252 \mu$ and $\sigma^* \approx \sqrt{252}\sigma$. When $\delta = 0$, Equation~(\ref{eq: GBM with jump: stock dynamics}) reduces to GBM. While it is not shown in this paper, the double linear policy~(\ref{eq: double linear policy}) assures RPE for the GBM case as well.}

 To simulate the price, we discretize the process~(\ref{eq: GBM with jump: stock dynamics}) by taking a time period length of $\Delta t := 1/252$ and $T = 1$ for one year with an annualized drift rate~$\mu^* \in (-1, 1)$, annualized volatility computed from historical data $\sigma_{\rm AAPL} \approx 35.63 \% $, jump intensity $\lambda = 0.2$ with a jump size $\delta = 0.1$. With initial account value $V_0 = 1$, 
 we consider four admissible weighting functions defined by~$w_i:\{0, 1, \dots, N=252\} \to \mathcal{W} \subseteq \mathbb{R}$ for~$i \in \{0,1,2,3\}$ with
 \begin{align*}
     w_0(k) & := 0.8;\\
     w_1(k) &:= \log\left( 1+\frac{k}{N}(e-1) \right); \\
     w_2(k) &:= \frac{1}{2} \left( \sin \left(\frac{1}{\frac{0.02}{N}k - 0.01} \right) + 1 \right); \\
     w_3(k) &:= 
           f(k)\sin \left( \frac{1}{f(k)} \right) \mathbb{1}_{ \left\{  f(k)\sin \left( \frac{1}{f(k)} \right)  \geq 0 \right\}  }(k),
 \end{align*}
 where $ f(k) := (\frac{4}{N}k - 2)$ and $\mathbb{1}_A(x)$ is an indicator function satisfying $\mathbb{1}_A(x) = 1$ for $x \in A$ and zero otherwise. 
 
 The four weighting functions above represent different investment philosophies. For example, $w_0(k)$ represents a constant buy-and-hold strategy, $w_1(k)$ represents an increasing investing strategy over the specified period,  $w_2(k)$ corresponds to a more active trading approach, and $w_3(k)$ represents investing more at the beginning and end of the period, with little or no investment in the middle.
 Consistent with the simulations conducted in~\cite{hsieh2022robust}, we generate $10,000$ GBM sample paths for each $\mu^*$ and various~$\alpha \in \{0.1, 0.3, 0.5, 0.7, 0.9\}$. Then we calculate the average cumulative gain-loss; see Figure~\ref{fig: Gain-Loss Function Simulation with jump}. 
For~$\alpha = 0.5$, the positive expectation gain is seen for all four weighting functions.


 \begin{figure}[h!]
     \centering
    \includegraphics[width=.9\linewidth]{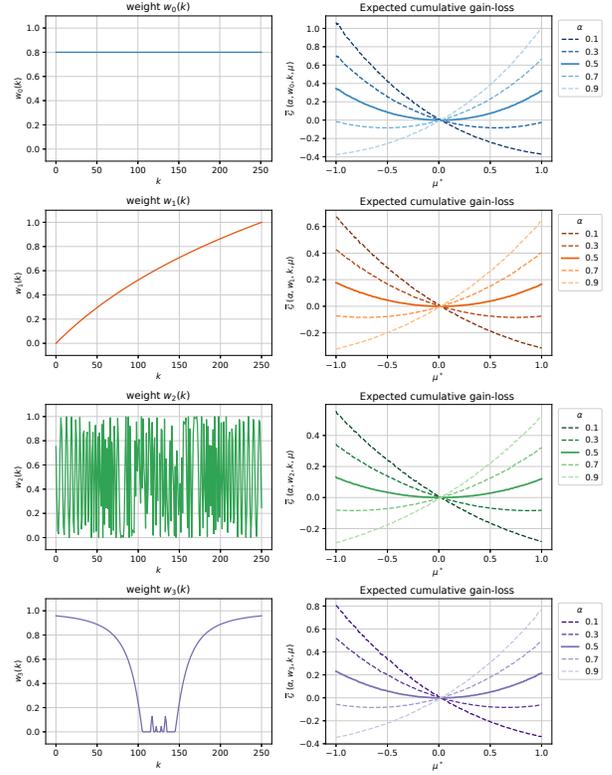}
     \caption{Weighting Functions (Left) and  Expected Gain-Loss for~$\mu^* \in (-1, 1)$.}
     \label{fig: Gain-Loss Function Simulation with jump}
 \end{figure}

 
\end{example}

\begin{example}[Minute-by-Minute Case] \rm
\label{example: minute-by-minute}
In this example, we study the performance of the double linear policy using relatively high-frequency minute-by-minute price data for Twitter Inc. (Ticker: TWTR) between May 4, 2022, and May~19,~2022.\footnote{During this period, CEO Elon Musk announced that the Twitter deal was temporarily put on hold on May 13, causing a 9.7\% decreases in shares at market close. The data is retrieved using the Bloomberg Terminal.}
The price trajectory for the specific period is shown in Figure~\ref{fig: minutely stock price}.
The figure also includes a subplot with a magnified view for the interval~$k \in [50, 100]$ minutes, featuring various moving average lines, which will be used in the next example.

\begin{figure}[h!]
    \centering    \includegraphics[width=.9\linewidth]{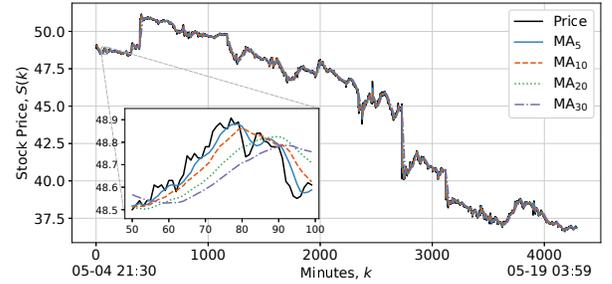}
    \caption{Twitter Minutely Prices from May 4, 2022 to May 19, 2022.}
    \label{fig: minutely stock price}
\end{figure}

We now examine the trading performance of the double linear policy using the same four weighting function $w_i(k)$ for $i \in \{0,1,2,3\}$ described in Example~\ref{example: GBM with jumps}. Specifically, with~$\alpha = 1/2$ and initial account value~$V_0 = 1$, the corresponding trading gain-loss trajectories are shown in Figure~\ref{fig: TWTR trading gain loss}. 
In contrast to the negative returns obtained by the buy-and-hold  (B\&H)  long-only strategy with constant weight $w_0$, we note that all the proposed weighting functions  of the double linear policy assured positive trading gains for the Twitter data.
Table~\ref{tab: minutely data trading performance} also summarizes another performance metric, such as variances and Sharpe ratio. 
It is also worth mentioning that similar findings hold for flipped TWTR price data, indicating the robustness of double linear policy and an ability to capture underlying market dynamics in both bull and bearish markets.

\begin{figure}[h!]
    \centering
    \includegraphics[width=.9\linewidth]{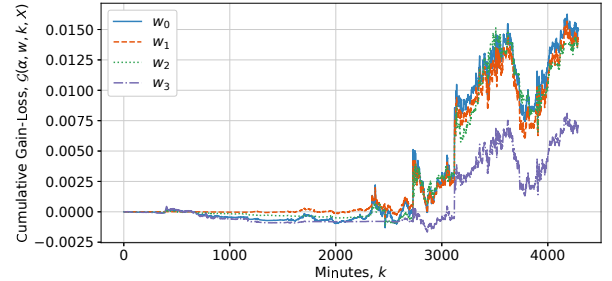}
    \caption{Cumulative Gain-Loss Using Twitter Minute-by-Minute Data.}
    \label{fig: TWTR trading gain loss}
\end{figure}

\begin{table}[h!]
    \centering
    \caption{Performance of Double Linear Policy with Various Weights}
    \begin{tabular}{c c c c c c}
         \hline
         & B\&H & $w_0$ & $w_1$ & $w_2$ & $w_3$ \\
        \hline
        \hline
       {\rm Gain-Loss} & -0.2003 & 0.0150 & 0.0142 & 0.0138 & 0.0070 \\
       \hline
       \makecell[c]{Variance} & 0.0066 & 2.9e-05 & 2.3e-05 & 2.6e-05 & 6.2e-06 \\
       \hline
        Sharpe Ratio & -1.6878 & 1.2106 & 1.2308 & 1.4722 & 0.9497 \\
        \hline
    \end{tabular}
    \label{tab: minutely data trading performance}
\end{table}
\end{example}

\begin{example}[Blending Moving Average Indicator] \rm
In this example, we blend the use of the \textit{moving average} indicator,  a common method in technical analysis,  as a criterion for designing the weighting function into the double linear policy. This  approach enables dynamic adjustment of the investment based on the indicator. 
The weighting function used in the double linear policy~(\ref{eq: double linear policy}) is defined as
\begin{align} \label{eq: weight with MA indicator}
 w_{{\rm MA}_d}(k) := w \cdot \mathbb{1}_{ \left\{ S(k) >{\rm MA}_d(k) \right\} }(k),
\end{align}
where $w \in \mathcal{W}$ and ${\rm MA}_d(k)$
represent the last $d$-period average stock price for~$d \geq 1$.
For example, in the case of minutely data,~${\rm MA}_5(k)$, ${\rm MA}_{10}(k)$, ${\rm MA}_{20}(k)$, ${\rm MA}_{30}(k)$ represent the last 5-minute, 10-minute, 20-minute, and 30-minute average stock price, respectively.
The investment philosophy is to invest only when the stock price is higher than the  moving average, which signals a buying opportunity.

With $\alpha = 1/2$, $V_0 = 1$, and $w=0.8$, we summarize the cumulative gain-loss, variance, and Sharpe ratio in Table~\ref{tab: minutely data trading performance for moving average}, and the trading trajectories are shown in Figure~\ref{fig: TWTR trading gain loss for moving average}.
From the table, we see that $w_{\rm MA_{20}}$ leads to the best performance in terms of the Sharpe ratio.
In all cases, we see positive returns using the weighting functions incorporated with the moving average indicator. Also, while not demonstrated in this paper, the MA indicator in Equation~(\ref{eq: weight with MA indicator}) can be readily replaced by another technical analysis indicator, such as weighted moving average, moving median, moving average convergence and divergence~(MACD) and so on.

\begin{table}[h!]
    \centering
    \caption{Performance of Double Linear Policy with MA Indicator}
    \begin{tabular}{c c c c c}
         \hline
         & $w_{{\rm MA}_5}$ & $w_{{\rm MA}_{10}}$ & $w_{{\rm MA}_{20}}$ & $w_{{\rm MA}_{30}}$\\
        \hline
        \hline
       {\rm Gain-Loss} & 0.0061 & 0.0076 & 0.0117 & 0.0042\\
       \hline
       Variance & 9.7e-06 & 1.1e-05 & 1.4e-05 & 2.2e-06\\
       \hline
        Sharpe Ratio & 0.7097 & 0.8806 & 1.6343 & 0.8651\\
        \hline
    \end{tabular}
    \label{tab: minutely data trading performance for moving average}
\end{table}

\begin{figure}[h!]
    \centering    
    \includegraphics[width=.9\linewidth]{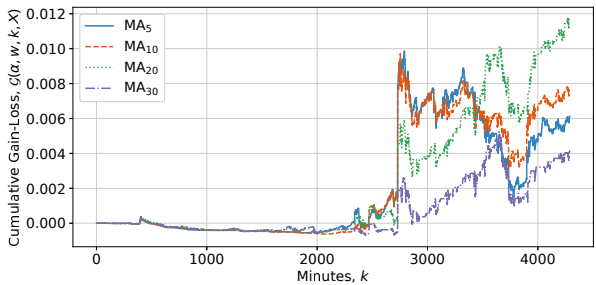}
    \caption{Cumulative Gain-Loss Using Various MA Indicators.}
    \label{fig: TWTR trading gain loss for moving average}
\end{figure}

 \end{example}

\section{Concluding Remarks}
This paper extends  the double linear policy by incorporating time-varying weights in a discrete-time setting. Using a set of elementary symmetric polynomials, we prove that the RPE property is preserved in the extended policy. In addition, we derive an explicit expression for the expected cumulative gain-loss function and its variance.
We conducted extensive Monte Carlo simulations using various weighting functions to validate our theory. Our results also show that the extended double linear policy with time-varying weights can be integrated with the standard technical analysis technique such as moving~average.

In future research, it would be interesting to expand our analysis to a multi-asset case, where the weights can be optimized for a portfolio of assets; see  \cite{hsieh2022robust} for an initial approach. Additionally, one valuable direction would be to investigate the impact of serial-correlated returns on the performance of the double linear policy with time-varying weights.
For example, an Auto-Regressive (AR) return model might be worth pursuing.
Finally, the impact of transaction costs could be considered to assess the practicality of the proposed policy in real-world applications; see \cite{hsieh2022robustness}.

\bibliographystyle{ieeetr}
\bibliography{refs}

\end{document}